\newif\ifdraft
\newcolumntype{L}{>{$}l<{$}} 
\newcommand{\R}{\textup{R}}
\newcommand{\delr}[1]{\Delta^{#1}\left(\textup{R}_8\right)}
\newcommand{\delrn}[2]{\Delta^{#1}\left(\textup{R}_{#2}\right)}
\newcommand{\Z}{\mathbb{Z}}
\newcommand{\e}[1]{e_{#1}}
\newcommand{\dsum}{\oplus}
\newcommand{\defeq}{\vcentcolon=}
\theoremstyle{definition}
\newtheorem{thm}{Theorem}[section]
\newtheorem{lemma}[thm]{Lemma}
\newtheorem*{conj}{Conjecture}
\newtheorem{propositionX}{Proposition}
\begin{document}
\allowdisplaybreaks
\title[Counterexample to conjecture]{Counterexample to a conjecture about dihedral quandle}

\author[S. Panja]{Saikat Panja}
\address{Department of Mathematics, IISER Pune \\ Maharashtra, India}
\email{panjasaikat300@gmail.com}
\author[S. Prasad]{Sachchidanand Prasad}
\address{Department of Mathematics and Statistics, IISER Kolkata \\ West Bengal, India}
\email{sp17rs038@iiserkol.ac.in}

\subjclass[2010]{}

\keywords{}

\begin{abstract}
    It was conjectured that the augmentation ideal of a dihedral quandle of even order $n>2$ satisfies $|\Delta^k(\R_n)/\Delta^{k+1}(\R_{n})|=n$ for all $k\ge 2$. In this article we provide a counterexample against this conjecture.
\end{abstract}
\subjclass[2020]{Primary: 20N02; Secondary: 20B25, 16S34, 17D99} 
\keywords{Quandle rings, Augmentation ideal}
\date{\today}

\maketitle

\setcounter{tocdepth}{3}

\frenchspacing 

\section{Introduction} \label{sec:introduction}
A \textit{quandle} is a pair $(A,\cdot)$ such that `$\cdot$' is a binary operation satisfying
\begin{enumerate}
	\item the map $S_a:A\longrightarrow A$, defined as $S_a(b)=b\cdot a$ is an automorphism for all $a\in A$,
	\item  for all $a\in A$, we have $S_a(a)=a$.
\end{enumerate}

\noindent To have a better understanding of the structure, a theory parallel to group rings was introduced by Bardakov, Passi and Singh in \cite{BaPaSi19}. Let $\Z_n$ denote the cyclic group of order $n$. Then defining  $a\cdot b=2b-a$ defines a quandle structure on $A=\Z_n$. This is known as \textit{dihedral quandle}. For other examples see \cite{BaPaSi19}. The quandle ring of a quandle $A$ is defined as follows. Let $R$ be a commutative ring. Consider 
\begin{displaymath}
	R[A] \defeq \left\{\sum_{i}r_ia_i: r_i\in R,a_i\in A  \right\}.
\end{displaymath}
Then this is an additive group in usual way. Define multiplication as 
\begin{displaymath}
	\left(\sum_{i}r_ia_i\right)\cdot \left(\sum_{j}s_ja_j\right) \defeq \sum_{i,j}r_is_j(a_i\cdot a_j).
\end{displaymath}
The \textit{augmentation ideal} of $R[A]$, $\Delta_R(A)$ is defined as the kernel of the augmentation map
\begin{displaymath}
	\varepsilon :R[A]\to R,~\sum_{i}r_ia_i \mapsto \sum_{i} r_i.
\end{displaymath}
The powers $\Delta^k_R(A)$ is defined as $\left(\Delta_R(A)\right)^k$. When $R=\Z$, we will be omitting the subscript $R$. The following proposition gives a basis for $\Delta_R(X)$.

\begin{propositionX}\cite[Proposition 3.2, Page 6]{BaPaSi19}  \label{prop:basis}
	A basis of $\Delta_R(X)$ as an $R$-module is given by $\{a-a_0:a\in A\setminus\{a_0\}\}$, where $a_0\in A$ is a fixed element. 
\end{propositionX}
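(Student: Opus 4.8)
The plan is to verify the three defining properties of a basis directly: containment in the augmentation ideal, $R$-linear independence, and spanning. The only structural input I need is that $R[A]$ is the free $R$-module with the elements of $A$ as a basis, and that $\varepsilon$ is the $R$-linear map sending each $a\in A$ to $1\in R$. Notably, the quandle multiplication plays no role whatsoever; the statement is purely a fact about the free module $R[A]$ and the $R$-linear functional $\varepsilon$, so the argument is identical to the classical computation of a basis for the augmentation ideal of a group ring.

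First I would check that each $a-a_0$ lies in $\Delta_R(A)$, which is immediate since $\varepsilon(a-a_0)=1-1=0$; hence the proposed set sits inside $\ker\varepsilon = \Delta_R(A)$. Next, for linear independence, suppose $\sum_{a\neq a_0} r_a(a-a_0)=0$ in $R[A]$. Expanding gives $\sum_{a\neq a_0} r_a\, a - \bigl(\sum_{a\neq a_0} r_a\bigr)a_0 = 0$, and since the distinct elements of $A$ form a free basis of $R[A]$, comparing the coefficient of each fixed $a\neq a_0$ forces $r_a=0$. Thus the set is $R$-linearly independent.

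Finally, for spanning, I would take an arbitrary $x=\sum_{a\in A} r_a\, a\in\Delta_R(A)$, so that $\sum_{a} r_a=0$ by definition of the kernel, and use this relation to absorb a multiple of $a_0$:
\[
    x=\sum_{a\in A} r_a\, a-\Bigl(\sum_{a\in A} r_a\Bigr)a_0=\sum_{a\in A} r_a(a-a_0)=\sum_{a\neq a_0} r_a(a-a_0),
\]
where the last equality holds because the $a=a_0$ summand vanishes. This exhibits $x$ as an $R$-combination of the proposed basis elements. I do not expect any genuine obstacle here; the single point requiring care is correctly invoking the augmentation condition $\sum_{a} r_a=0$ to justify subtracting the term $\bigl(\sum_a r_a\bigr)a_0$ in the spanning step, which is precisely what distinguishes a general element of $R[A]$ from one lying in $\Delta_R(A)$.
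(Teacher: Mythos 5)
Your proof is correct: membership in $\ker\varepsilon$, linear independence via freeness of $R[A]$ on $A$, and spanning via the relation $\sum_a r_a=0$ are exactly the three points needed, and your observation that the quandle multiplication is irrelevant is accurate. Note that the paper itself gives no proof of this proposition (it is quoted from \cite[Proposition 3.2]{BaPaSi19}), and your argument is precisely the standard one given there, so there is nothing to reconcile.
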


 The following has been conjectured in \cite[Conjecture 6.5, Page 20] {BaPaSi19}.
\begin{conj}
	Let $\R_n=\{a_0,a_1,\cdots,a_{n-1}\}$ denote the dihedral quandle of order $n$. Then we have the following statements.
	\begin{enumerate}
		\item For an odd integer $n>1$, $\delrn{k}{n}/\delrn{k+1}{n}\cong \Z_n$ for all $k\ge 1$.
  		\item For an even integer $n> 2$, $\left|\delrn{k}{n}/\delrn{k+1}{n}\right|=n$ for $k\ge 2$.
	\end{enumerate}
	The first statement has been confirmed by Elhamdadi, 
	Fernando and Tsvelikhovskiy in \cite[Theorem 6.2, Page 182]{ElFeTs19}. The second statement holds true for $n=4$, see \cite{BaPaSi19}. Here we have given a counterexample in \autoref{thm:mainTheorem} to show that the conjecture is not true in general. 
	
\end{conj}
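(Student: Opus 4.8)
The plan is to \emph{disprove} the second (even-order) part of the conjecture by exhibiting one even $n$ together with one exponent $k\ge 2$ for which $\left|\delrn{k}{n}/\delrn{k+1}{n}\right|\neq n$. The notation fixed in the preamble singles out $n=8$ as the candidate, so I would work throughout with $\R_8=\Z_8=\{0,1,\dots,7\}$ under the operation $i\cdot j=2j-i \pmod 8$, for which the induced ring multiplication is $\e{i}\cdot\e{j}=\e{2j-i}$ with indices read modulo $8$. Taking $a_0=0$ in Proposition~\ref{prop:basis}, the ideal $\delr{1}$ is free as a $\Z$-module on $\e{1}-\e{0},\dots,\e{7}-\e{0}$, and the single identity
\begin{displaymath}
	(\e{i}-\e{0})(\e{j}-\e{0})=\e{2j-i}-\e{2j}-\e{-i}+\e{0}
\end{displaymath}
is what drives every subsequent computation. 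Since $\delr{1}$ is free abelian of rank $7$, each power $\delr{k}$ is again free abelian of finite rank, so each quotient $\delr{k}/\delr{k+1}$ is a finitely generated abelian group; when the two consecutive powers have equal rank its order is the absolute value of the determinant relating a basis of $\delr{k+1}$ to one of $\delr{k}$, while a drop in rank would make the quotient infinite and already contradict the asserted value $n$. In either case the whole problem is reduced to integer linear algebra and Smith normal forms.

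Before iterating I would record the structural facts that stop the generating sets from exploding. From the displayed rule one checks directly that $(\e{i}-\e{0})(\e{4}-\e{0})=0$ for every $i$, so the factor $\e{4}-\e{0}$ may be discarded from all right-hand multiplications. Moreover, since $2j$ is always even, the terms $\e{2j}$ and $\e{0}$ are supported on the even residues $\{0,2,4,6\}$, which themselves form a subquandle isomorphic to $\R_4$; this splits the $7\times 7$ product array into an ``even'' block carrying a copy of the (already understood) $\R_4$ computation and an ``odd'' block, and I would exploit this parity decomposition to organise the bookkeeping.

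The core of the argument is then an induction that produces explicit $\Z$-spanning sets for $\delr{2},\delr{3},\delr{4},\dots$ in turn. Here one must respect that the dihedral operation is \emph{not} associative, so the quandle ring is non-associative and the correct meaning of the power is $\delr{k}=\sum_{a+b=k}\delr{a}\,\delr{b}$ over all splittings and bracketings; consequently the inductive step must multiply a spanning set of each lower power into each complementary power, not merely multiply $\delr{k}$ by $\delr{1}$. After assembling a spanning set at level $k$, I would extract an honest basis, form the integer matrix expressing the generators of $\delr{k+1}$ in that basis, and read $\left|\delr{k}/\delr{k+1}\right|$ off its Smith normal form, continuing until the first $k$ at which the order fails to equal $8$; that $k$ is the desired counterexample.

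The main obstacle is precisely the non-associativity combined with the combinatorial growth of the spanning sets: the number of distinct products (and bracketings) feeding $\delr{k}$ increases quickly, and the delicate point is not the upper bound on the index (showing that a given set spans) but certifying that the spanning set is \emph{complete}, so that the extracted determinant is the exact order rather than a multiple of it. Controlling both the $\le$ and $\ge$ directions of the index at the critical level $k$, either through hand-verifiable reductions grounded in the parity and annihilation identities above or through a carefully documented machine computation, is where the real work lies.
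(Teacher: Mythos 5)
Your strategy is, in outline, exactly the paper's: specialize to $\R_8$, work in the basis $\{a_i-a_0\}$ of \autoref{prop:basis}, use the annihilation identity $e_i\cdot e_4=0$ (\autoref{lemma:multiplictionWith_e4}), and reduce everything to integer linear algebra on $\Z$-bases of consecutive powers. The genuine gap is that the plan is never carried out: you exhibit no pair $(n,k)$ with $\left|\delrn{k}{n}/\delrn{k+1}{n}\right|\neq n$, you compute no quotient, and your stopping rule (``continue until the first $k$ at which the order fails to equal $8$'') carries no termination guarantee --- if the conjecture happened to be true for $n=8$, your procedure would run forever and establish nothing. For a disproof, producing the actual number \emph{is} the theorem, and that is precisely what the paper supplies at the very first admissible exponent $k=2$: a rank-$6$ basis $\mathcal{B}_2=\{u_1,\dots,u_6\}$ of $\delr{2}$, a basis $\mathcal{B}_3=\{v_1,\dots,v_6\}$ of $\delr{3}$ extracted from the products $u_i\cdot e_j$ (where $j=1,2,3$ suffices by \autoref{lemma:multiplictionWith_e4} and \autoref{lemma:multiplictionSymmetry}), and a triangular change-of-basis matrix of determinant $\pm 16$, yielding $\delr{2}/\delr{3}\cong \Z_4\dsum\Z_4$ of order $16\neq 8$. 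Without that computation (or an equivalent one), the statement is not established.

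A second point: the obstacle you single out as ``where the real work lies'' --- non-associativity forcing you to track every splitting $\delrn{a}{8}\cdot\delrn{b}{8}$ and every bracketing --- in fact dissolves for dihedral quandles, and seeing this is what legitimizes the one-sided computation that you (and the paper) actually need. Identify $\Z[\R_8]$ with $A=\Z[t]/(t^8-1)$ via $a_i\mapsto t^i$; the quandle product becomes $x\cdot y=\iota(x)*\sigma(y)$, where $*$ is the associative, commutative group-ring product, $\iota(t^i)=t^{-i}$ and $\sigma(t^i)=t^{2i}$. One then checks $\delr{2}=(t-1)(t^2-1)A$, $\delr{2}\cdot\delr{}=(t-1)(t^2-1)^2A$, and $\delr{}\cdot\delr{2}=(t-1)(t^2-1)(t^4-1)A$; since $t^4-1=(t^2-1)(t^2+1)$, the left-normed product already lies inside the right-normed one, so every choice of splitting produces the same $\delr{3}$, and multiplying a basis of $\delr{2}$ by $e_1,e_2,e_3$ on the right is a \emph{complete} computation rather than a lower bound. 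The same reformulation shortcuts the arithmetic you were dreading: $f=(t-1)(t^2-1)$ is invertible exactly on the factors $\Q[t]/(t^2+1)$ and $\Q[t]/(t^4+1)$ of $\Q[t]/(t^8-1)$ (writing $\Q$ for the rationals), and the index of $(t^2-1)fA$ in $fA$ equals the determinant of multiplication by $t^2-1$ on those two factors, namely $4\cdot 4=16$, confirming the paper's answer without any tables.
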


\section{Counterexample}\label{sec:counterexample}

\begin{thm} \label{thm:mainTheorem}
    Let $\R_8$ be the dihedral quandle of order $8$. Then 
    \begin{displaymath}
        \left|\Delta^2\left(\R_8\right)/\Delta^3\left(\R_8\right)\right|= 16.
    \end{displaymath}
\end{thm}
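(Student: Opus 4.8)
The plan is to work over $\Z$ and exploit the explicit module structure. By \autoref{prop:basis}, fixing $a_0$ gives a free $\Z$-basis $e_i \defeq a_i - a_0$ ($1 \le i \le 7$) of $\Delta(\R_8)$, so $\Delta(\R_8) \cong \Z^7$. The first step is to record the multiplication on this basis. Writing indices modulo $8$ and setting $e_0 \defeq 0$, a direct expansion of $(a_i - a_0)\cdot(a_j - a_0)$ using $a_p \cdot a_q = a_{2q - p}$ yields
\begin{displaymath}
    e_i e_j = e_{2j - i} - e_{-i} - e_{2j}.
\end{displaymath}
This single identity drives everything. Since the operation is $\Z$-bilinear, right multiplication by $e_k$ is a $\Z$-linear endomorphism $R_k$ of $\Delta(\R_8)$, and the formula immediately gives $R_4 = 0$ together with $R_{k+4} = R_k$. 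Hence only $R_1, R_2, R_3$ are relevant.

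Next I would compute $\Delta^2(\R_8) = \operatorname{im} R_1 + \operatorname{im} R_2 + \operatorname{im} R_3$ explicitly: each $R_k$ is a $7 \times 7$ integer matrix read off from the identity above, and I would reduce the $21$ generating columns to a $\Z$-basis of the sublattice $\Delta^2(\R_8) \subseteq \Z^7$ via Hermite or Smith normal form, recording its rank and explicit generators. Then $\Delta^3(\R_8) = R_1\!\left(\Delta^2\right) + R_2\!\left(\Delta^2\right) + R_3\!\left(\Delta^2\right)$ is obtained by applying the same three matrices to those generators. Finally, expressing the generators of $\Delta^3(\R_8)$ in the computed basis of $\Delta^2(\R_8)$ and taking the Smith normal form of the resulting relation matrix, the order $\left|\Delta^2(\R_8)/\Delta^3(\R_8)\right|$ is the product of the nonzero invariant factors; the claim is that this product equals $16$.

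Two points need care. First, the quandle ring is non-associative, so $\Delta^3$ must be pinned down unambiguously; I would fix the convention $\Delta^3 = \Delta^2 \cdot \Delta$ and verify (or otherwise account for) agreement with $\Delta \cdot \Delta^2$, since a different bracketing could in principle change the submodule. Second, and this is the real content, the integer linear algebra must be executed exactly: the surprise is that the invariant factors multiply to $16$ rather than the conjectured $8$, so the main obstacle is to locate and justify the extra $\Z_2$ --- to exhibit $2$-torsion in $\Delta^2(\R_8)/\Delta^3(\R_8)$ beyond what the $n=4$ pattern predicts --- rather than losing it to an arithmetic slip in the normal-form computation.
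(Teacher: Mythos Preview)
Your proposal is correct and follows essentially the same route as the paper: the paper also records the products $e_ie_j$ (packaging your observations $R_4=0$ and $R_{k+4}=R_k$ as two short lemmas), extracts a $\Z$-basis $\{u_1,\dots,u_6\}$ of $\Delta^2(\R_8)$ by integer row reduction, multiplies on the right by $e_1,e_2,e_3$ to obtain a basis of $\Delta^3(\R_8)=\Delta^2\cdot\Delta$, and then reads off the quotient as $\Z_4\oplus\Z_4$. Your caveat about non-associativity is well taken; the paper silently adopts the right-bracketing convention $\Delta^{k+1}=\Delta^k\cdot\Delta$ without discussing the other bracketing.
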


\noindent From \autoref{prop:basis}, we get that $\{e_i=a_i-a_0:i=1,2,\cdots, n-1\}$ is a basis for $\delrn{}{n}$. We will be using this notation in the subsequent computation.

\begin{lemma}\label{lemma:multiplictionWith_e4}
    Let $\R_{2k}$ denote the dihedral quandle of order $2k~(k\ge 2)$. Then $e_i \cdot e_k=0$ for all $i=1,2,\cdots, 2k-1$.
\end{lemma}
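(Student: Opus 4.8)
The plan is to compute the product $e_i\cdot e_k$ directly from the dihedral operation $a_p\cdot a_q = 2q-p$, all indices read modulo $2k$. First I would expand, using bilinearity of the quandle-ring multiplication, $e_i\cdot e_k = (a_i-a_0)\cdot(a_k-a_0) = a_i\cdot a_k - a_i\cdot a_0 - a_0\cdot a_k + a_0\cdot a_0$, so that the whole computation reduces to evaluating four products of basis elements of $\R_{2k}$.

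The observation I expect to carry the argument is that the inner automorphism attached to the ``antipodal'' element $a_k$ coincides with the one attached to $a_0$. Concretely, for every $p$ we have $a_p\cdot a_k = 2k-p\equiv -p\pmod{2k}$ while $a_p\cdot a_0 = -p\pmod{2k}$, and these agree because $2k\equiv 0$; thus $S_{a_k}=S_{a_0}$ as maps on $\R_{2k}$. Consequently right multiplication by $a_k$ and by $a_0$ agree on every basis vector $e_i$, hence on all of $\delrn{}{2k}$.

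Granting this, I would rewrite $e_i\cdot e_k = e_i\cdot a_k - e_i\cdot a_0$ and conclude that the two terms cancel, giving $e_i\cdot e_k=0$. Alternatively, evaluating the four terms of the expansion outright yields $a_{2k-i} - a_{2k-i} - a_0 + a_0 = 0$, which confirms the claim. Since the argument never uses the particular value of $i$, the identity holds uniformly for all $i=1,2,\ldots,2k-1$ with no case analysis.

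There is no serious obstacle here beyond bookkeeping: the only point requiring care is the reduction modulo $2k$, specifically recognizing that $2k\equiv 0$. This is precisely what forces reflection through $a_k$ to coincide with reflection through $a_0$ and makes the product vanish; it is also the feature special to the middle index $k$ that will not persist for other basis elements.
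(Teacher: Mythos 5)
Your proposal is correct and is essentially the paper's own proof: the paper expands $e_i\cdot e_k=(a_i-a_0)\cdot(a_k-a_0)$ bilinearly and observes the four terms cancel as $a_{2k-i}-a_{2k-i}-a_0+a_0=0$, which is exactly the computation you give in your third paragraph. Your framing via $S_{a_k}=S_{a_0}$ (i.e.\ $2k\equiv 0 \pmod{2k}$ makes reflection through $a_k$ coincide with reflection through $a_0$) is a pleasant conceptual repackaging of the same cancellation, not a different argument.
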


\begin{proof}
    Observe that 
    \begin{align*}
        e_i \cdot e_k & = \left(a_i -a_0\right) \cdot \left(a_k-a_0\right) \\
        & = a_{2k-i}-a_{2k-i}-a_0+a_0=0.
    \end{align*}
\end{proof}

\begin{lemma}\label{lemma:multiplictionSymmetry}
    Let $\R_{2k}$ denote the dihedral quandle of order $2k~(k\ge 2)$. Then $e_i\cdot e_j = e_i \cdot e_{k+j}$ for all $j=1,2,\cdots,k-1$ and for all $i=1,2,\cdots,2k-1$.
\end{lemma}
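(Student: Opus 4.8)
The plan is to prove the identity by a direct expansion of both products over the underlying group basis $\{a_0,a_1,\ldots,a_{2k-1}\}$ of $\Z_{2k}$, and then to exploit the structural fact that the dihedral operation $a_p\cdot a_q = a_{2q-p}$ feeds its \emph{second} argument in only through the doubled index $2q$. Since $2(k+j)=2j+2k\equiv 2j\pmod{2k}$, replacing the second factor $a_j$ by $a_{k+j}$ produces no net change, and this is exactly the asserted symmetry. So the heart of the matter is a single modular reduction rather than any genuine structural difficulty.

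First I would expand the left-hand side using the bilinearity of the quandle-ring multiplication together with the relation $a_p\cdot a_q = a_{2q-p}$, all indices read modulo $2k$:
\begin{align*}
    e_i\cdot e_j &= (a_i-a_0)\cdot(a_j-a_0) \\
    &= a_i\cdot a_j - a_i\cdot a_0 - a_0\cdot a_j + a_0\cdot a_0 \\
    &= a_{2j-i} - a_{2k-i} - a_{2j} + a_0.
\end{align*}
I would then carry out the identical expansion for the right-hand side, which gives
\begin{align*}
    e_i\cdot e_{k+j} &= a_{2(k+j)-i} - a_{2k-i} - a_{2(k+j)} + a_0.
\end{align*}

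The key step is the modular reduction in the two terms that actually involve $j$: because $2(k+j)=2j+2k$ and $2k\equiv 0\pmod{2k}$, we have $a_{2(k+j)-i}=a_{2j-i}$ and $a_{2(k+j)}=a_{2j}$. Substituting these back reproduces the first expansion term for term, so $e_i\cdot e_j = e_i\cdot e_{k+j}$. The two remaining terms, $a_i\cdot a_0 = a_{2k-i}$ and $a_0\cdot a_0 = a_0$, are independent of $j$ and hence agree on both sides automatically. I note that the hypothesis $1\le j\le k-1$ serves only to guarantee $k+1\le k+j\le 2k-1$, so that $e_{k+j}$ is a genuine basis element of $\delrn{}{2k}$; it plays no role in the algebra itself.

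I do not anticipate a real obstacle here: the entire content is the observation that the second operand enters the dihedral operation doubled, turning a shift by $k$ into a shift by $2k\equiv 0$. The only points requiring care are consistent index bookkeeping modulo $2k$ and correctly applying the bilinear extension of $\cdot$ from $\R_{2k}$ to its quandle ring. As a sanity check I would verify the expansion against \autoref{lemma:multiplictionWith_e4}, taking $j$ near $k$ to confirm that the term-by-term cancellation behaves as expected.
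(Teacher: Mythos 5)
Your proposal is correct and is essentially the paper's own argument: both expand the product by bilinearity, apply the dihedral rule $a_p\cdot a_q=a_{2q-p}$, and observe that $2(k+j)\equiv 2j \pmod{2k}$ makes the two $j$-dependent terms coincide. Your version merely writes out the modular reduction explicitly where the paper leaves it implicit.
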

\begin{proof}
    Note that 
    \begin{align*}
        e_i \cdot e_{k+j} & = a_ia_{k+j}-a_ia_0-a_0a_{k+j}+a_0 \\
        & = a_i a_j - a_i a_0 -a_0a_j+a_0 \\
        & = e_i \cdot e_j.
    \end{align*}
\end{proof}
\noindent We will use \autoref{lemma:multiplictionWith_e4} and \autoref{lemma:multiplictionSymmetry} to simplify the multiplication tables. 
\begin{proof}[Proof of \autoref{thm:mainTheorem}]
    Recall that a basis of $\delr{}$ is given by $\mathcal{B}_1=\{e_1,e_2,\cdots,e_7\}$. The multiplication table for the $e_i\cdot e_j$ is given as follows:
    \begin{center}
        \begin{displaymath}
            \begin{array}{|c|c|c|c|}
            \hline
                     & e_1 & e_2 & e_3  \\ \hline 
                 e_1 & e_1-e_2-e_7 & e_3-e_4-e_7 & e_5-e_6-e_7 \\
                 \hline
                 e_2 & -e_2-e_6 & e_2-e_4-e_6 & -2e_6 \\
                 \hline 
                 e_3 & -e_2-e_5+e_7 & e_1-e_4-e_5& e_3-e_5-e_6 \\ 
                 \hline
                 e_4 &  -e_2-e_4+e_6 & -2e_4 & e_2 - e_4- e_6 \\ 
                 \hline
                 e_5 & -e_2-e_3+e_5 & -e_3-e_4+e_7 & e_1-e_3-e_6 \\
                 \hline
                 e_6 & -2e_2 + e_4 & -e_2 - e_4 + e_6 & -e_2-e_6 \\
                 \hline
                 e_7 & -e_1-e_2 + e_3 & -e_1-e_4+e_5 & -e_1-e_6+e_7 \\
                 \hline
            \end{array}
        \end{displaymath}
    \end{center}
    Since $\delr{2}$ is generated by $e_i\cdot e_j$ as a $\Z$-module, using row reduction over $\Z$ one can show that a $\Z$-basis is given by 

    \begin{align*}
        \mathcal{B}_2 = & \left\{u_1 = \e{1}-\e{2}-\e{7}, u_2 = \e{2}+\e{6}, u_3= \e{3}-\e{4}-\e{7},\right. \\ 
        & \kern .5cm \left.u_4 = \e{4}+2\e{6}, u_5 = \e{5}-\e{6}-\e{7}, u_6 = 4\e{6} \right\}.
    \end{align*}
    We now want to express a $\Z$-basis of $\delr{3}$ in terms of $\mathcal{B}_2$. First we calculate the products $u_i\cdot e_j$. This is presented in the following table.
    \begin{center}
        \begin{displaymath}
            \begin{array}{|c|c|c|c|}
            \hline
                     & e_1 & e_2 & e_3  \\ \hline 
                 u_1 & \makecell{2e_1 + e_2 -e_3 \\ +e_6 -e_7} & \makecell{e_1 -e_2 +e_3 \\+e_4 -e_5 +e_6 -e_7 }& \makecell{e_1 -e_4 +e_5 \\ +2e_6 -2e_7}  \\
                 \hline
                 u_2 & -3e_2+e_4 -e_6 & -2e_4 & -e_2 +e_4 -3e_6  \\
                 \hline 
                 u_3 & \makecell{e_1+e_2-e_3\\+e_4-e_5-e_6+e_7} & 2e_1+2e_4-2e_5& \makecell{e_1-e_2+e_3+e_4 \\-e_5 +e_6 -e_7}  \\ 
                 \hline
                 u_4 &  -5e_2-e_4+e_6 & -2e_2-4e_4+2e_6 & -e_2-e_4 -3e_6 \\ 
                 \hline
                 u_5 & \makecell{e_1+2e_2-2e_3\\-e_4+e_5} & \makecell{e_1+e_2-e_3+e_4\\-e_5-e_6+e_7} & 2e_1+e_2-e_3+e_6-e_7 \\
                 \hline
                 u_6 & -8e_2+4e_4 & -4e_2-4e_4+4e_6 & -4e_2-4e_6 \\
                 \hline
            \end{array}
        \end{displaymath}
    \end{center}
    
    \noindent Hence, a $\Z$-basis for $\delr{3}$ is given by
    \begin{align*}
        \mathcal{B}_3 & = \left\{v_1 = e_1-e_2+e_3+e_4-e_5+e_6-e_7, v_2 = e_2 - e_3 -2e_4+2e_5+e_6-e_7, \right. \\
        & \kern 0.5cm \left. v_3 = -e_3-e_4+2e_5-2e_6-e_7, v_4 = -2e_4, v_5 = -4e_5-4e_6 + 4e_7, v_6 = 8e_6 \right\}.
    \end{align*}
    Now we will present the elements of $\mathcal{B}_3$ in terms of $\mathcal{B}_2$. We have the following presentation. 
    \begin{displaymath}
        \begin{array}{c c c c c c c c}
            v_1 & = & u_1 & & & + 2u_4 & -u_5 & -u_6 \\
            v_2 & = &     & u_2 & -u_3 & - u_4 & + 2u_5 & + u_6 \\
            v_3 & = & & & -u_3 & -2u_4 & +2u_5 & +u_6 \\
            v_4 & = & & & & 2u_4 & & -u_6\\
            v_5 & = & & & & & -4u_5 \\
            v_6 & = & & & & & & 2u_6.
        \end{array}
    \end{displaymath}
    Note that we can alter the basis $\mathcal{B}_2$ of $\delr{2}$ as follows:
    \begin{align*}
        & \left\{u_1+2u_4-u_5-u_6, u_2-u_3-u_4+2u_5+u_6, u_3+2u_4-2u_5-u_6, u_4, u_5, u_6 \right\}.
    \end{align*}
    Hence,
    \begin{align*}
        \dfrac{\delr{2}}{\delr{3}} & \cong \dfrac{\Z v_1\dsum \Z v_2 \dsum \Z v_3 \dsum \Z u_4\dsum \Z u_5 \dsum \Z u_6}{\Z v_1\dsum \Z v_2 \dsum \Z v_3 \dsum \Z (2u_4-u_6)\dsum \Z (-4u_5) \dsum \Z (2u_6)} \\
        & \cong \Z_4\dsum \dfrac{\Z u_4 \dsum \Z u_6}{\Z (2u_4-u_6) \dsum \Z (2u_6)} \\ 
        & \cong \Z_4 \dsum \dfrac{\Z u_4 \dsum \Z u_6}{\Z u_4 \dsum \Z (4u_6)} \\
        & \cong \Z_4 \dsum \Z_4.
    \end{align*}
\end{proof}

\noindent\textbf{Acknowledgements:} The first author (Panja) acknowledges the support of NBHM PhD fellowship. The second author (Prasad) was supported by UGC (NET)-JRF fellowship. 

\bibliographystyle{alphaurl}

\end{document}